\documentclass{article}
\usepackage{amsmath}
\usepackage{amsthm}
\usepackage{tikz}
\usepackage{amssymb}
\usepackage{mathtools}
\usepackage{mathabx}
\usepackage{algorithm}
\usepackage{algpseudocode}
\usepackage{hyperref}
\usepackage[toc]{appendix}
\usepackage[nameinlink]{cleveref}

\DeclarePairedDelimiter\floor{\lfloor}{\rfloor}
\DeclareMathAlphabet{\mathbbold}{U}{bbold}{m}{n}

\usepackage[numbers,sort&compress]{natbib}

\def\parens#1{\left(#1\right)}

\usepackage{tikz}
\usetikzlibrary{angles,quotes}

\usepackage{thmtools}
\usepackage{thm-restate}

\newtheorem{theorem}{Theorem}
\newtheorem{lemma}{Lemma}
\newtheorem*{lemma*}{Lemma}
\newtheorem{corollary}{Corollary}

\newtheorem*{claim*}{Claim}

\theoremstyle{remark}

\newtheorem*{remark*}{Remark}

\newtheorem*{example*}{Example}

\pdfstringdefDisableCommands{%
  \def\pi{pi}%
  \def\({Math:[}%
  \def\){]}%
}

\bibliographystyle{alphaurl}

\title{\textbf{The Prisoners and the Swap: \\ Less than Half is Enough}}
\author{Uri Mendlovic
\\  \href{mailto:urimend@gmail.com}{urimend@gmail.com}}
\date{June 2024}

\begin{document}

\maketitle

\begin{abstract}
We improve the solution of the classical prisoners and drawers riddle, where all prisoners can find their number using the pointer-following strategy, provided that the prisoners can send a spy to inspect all drawers and swap one pair of numbers. In the traditional approach, each prisoner may need to open up to half of the drawers. We show that this strategy is sub-optimal. Remarkably, a single swap allows all $n$ prisoners to find their number by opening only $\frac{n \ln \ln n}{\ln n} (1 + o(1))$ drawers in the worst case. We show that no strategy can do better than that by a factor larger than two. Efficiently constructing such a strategy is harder, but we provide an explicit efficient strategy that requires opening only $O(\frac{n \log \log n}{\log n})$ drawers by each prisoner in the worst case.
\end{abstract}

\section{Introduction}
\subsection{The prisoners and the drawers}
The famous prisoners and drawers riddle originated in a paper by  Anna Gál and Peter Bro Miltersen \cite{succint_2003}. A simpler version of the problem given in \cite{math_puzzles}. Here we use the phrasing given in \cite{analytic_combinatorics}:

\begin{quotation}
The director of a prison offers 100 death row prisoners, who are numbered from 1 to 100, a last chance. A room contains a cupboard with 100 drawers. The director randomly puts one prisoner's number in each closed drawer. The prisoners enter the room, one after another. Each prisoner may open and look into 50 drawers in any order. The drawers are closed again afterwards. If, during this search, every prisoner finds their number in one of the drawers, all prisoners are pardoned. If even one prisoner does not find their number, all prisoners die. Before the first prisoner enters the room, the prisoners may discuss strategy — but may not communicate once the first prisoner enters to look in the drawers. What is the prisoners' best strategy?
\end{quotation}

\cite{succint_2003} credited Sven Skyum for solving this problem, though they left the solution as a riddle to the reader. The solution is based on the well known pointer-following strategy. The drawers are numbered and each prisoner first opens the drawer labeled with their own number. If the drawer contains the number of another prisoner, they next open the drawer labeled with this number, and so on. Each prisoner follows this strategy until they find their number.

In the above phrasing the numbers are put into the drawers in a random permutation. The prisoners all succeed if and only if the permutation contains no cycle larger than 50. This happens with a probability of about 31\%. This method is optimal, as proven in \cite{locker_2006}.

\subsection{The spy and the swap}
Another version of the problem allows the prisoners to send a spy after the numbers are placed into the drawers but before the prisoners enter the room. The spy can inspect all drawers and swap a single pair of numbers. The spy cannot communicate with the prisoners after entering the room.

In this version all prisoners can find their number by opening 50 drawers, regardless of the assignment of the original numbers. In other words, all prisoners escape with probability 1. This is achieved by instructing the spy to make a single swap that splits the largest cycle into two, ensuring no cycles larger than 50 remain.

This paper considers the problem of finding a strategy for the spy and the prisoners such that each prisoner finds their number by opening fewer than half the drawers, irrespective of the initial number placement. Surprisingly, despite the abundance of papers discussing this problem (and a Veritasium video with millions of views \cite{veritasium}), we believe this problem was never studied before. 

As far as we know, the only work to consider a swap and opening less than half of the drawers was carried out by Czumaj et al \cite{haystack}, but their work is restricted to the case where each prisoner can open just two drawers. This of course is not enough to allow all prisoners to find their numbers. Instead, they assume a uniformly random permutation and count the number of successful prisoners on average, or equivalently, the probability that a uniformly chosen prisoner is successful. Our approach can be viewed as a generalization of their method, allowing more than two drawers to be opened.

\subsection{Our contribution}
Our main contribution is the following result.

\begin{theorem}
\label{theorem:probabilistic}
The spy and the $n$ prisoners have a strategy such that for any initial permutation the spy makes a single swap and then all prisoners find their number by opening $\frac{n \ln \ln n}{\ln n} (1 + o(1))$ drawers, and this amount is optimal up to a factor of two in the number of drawers opened.
\end{theorem}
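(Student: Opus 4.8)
The statement splits into a universal lower bound, showing $k\ge\frac{n\ln\ln n}{2\ln n}(1-o(1))$ opens are needed, and a construction meeting $k=\frac{n\ln\ln n}{\ln n}(1+o(1))$; I would treat these separately.

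\emph{Lower bound.} Fix any prisoners' strategy in which each prisoner opens at most $k$ drawers, together with any rule $\pi\mapsto s(\pi)$ by which the spy picks a transposition or the identity. Let $G\subseteq S_n$ be the set of permutations on which every prisoner finds their number under the prisoners' strategy alone. The hypothesis that the scheme always works says $\pi\cdot s(\pi)\in G$ for all $\pi$, so $\pi\mapsto\pi\cdot s(\pi)$ maps $S_n$ into $G$; since $\pi=\sigma\cdot s(\pi)^{-1}$ whenever $\pi\cdot s(\pi)=\sigma$, this map is at most $(\binom n2+1)$-to-one, whence $|G|\ge n!/(\binom n2+1)$. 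On the other hand $|G|/n!$ is the probability that a uniformly random permutation is handled by the prisoners' strategy, and by the optimality of pointer-following for the $k$-drawer locker problem (\cite{locker_2006}; the argument is not specific to $k=n/2$) this is at most $p(n,k):=\Pr[\text{a uniform }\sigma\in S_n\text{ has no cycle of length exceeding }k]$. Hence $p(n,k)\ge 1/(\binom n2+1)$. Plugging in the asymptotics of $p(n,k)$ — the ``smooth permutation'' count, classical in flavour (cf.\ \cite{analytic_combinatorics}), which in the relevant regime $k=n/u$, $u\to\infty$ decays like $\exp(-(1+o(1))\,u\ln u)$ up to secondary terms that must be tracked with care — forces $u\le\frac{2\ln n}{\ln\ln n}(1+o(1))$, i.e.\ $k\ge\frac{n\ln\ln n}{2\ln n}(1-o(1))$, which is half the achievable bound.

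\emph{Upper bound: the target.} It suffices to produce, for $k=\frac{n\ln\ln n}{\ln n}(1+o(1))$, a prisoners' strategy using at most $k$ opens whose good set $G$ is a \emph{dominating set} for right multiplication by transpositions: $G\cdot(\{\text{transpositions}\}\cup\{\mathrm{id}\})=S_n$, the spy then announcing the transposition that lands in $G$. Note $|G|/n!\le p(n,k)=n^{-1+o(1)}$ is far above the bare minimum of order $n^{-2}$ that a dominating set needs, so the difficulty is the \emph{distribution} of $G$, not its size: the naive candidate — all permutations with no cycle longer than $k$ — fails badly here, because a single swap shortens at most one long cycle, so a permutation with three cycles of length $\approx n/3\gg k$ can never be brought inside it. The strategy is therefore forced to let a prisoner stuck on a cycle of length up to $\Theta(n)$ recover their number in only $o(n)$ opens, for a family of cycle structures dense and spread out enough to be a dominating set.

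\emph{Upper bound: the construction and the obstacle.} Since a permutation generically cannot be inverted locally in fewer steps than a cycle length, the route is to have all prisoners share a fixed random auxiliary structure against which a ``typical'' long cycle becomes navigable. Concretely I would have the prisoners fix a uniformly random partition of $[n]$ into $m\approx n/k$ classes together with random within-class orders, and design an adaptive routine alternating ordinary pointer-following with jumps to canonical class representatives, so that on a generic permutation a prisoner reaches the drawer holding their number within $O(k)$ opens; the single swap is held in reserve to repair the one residual obstruction the random structure leaves (the analogue of ``splitting off the last bad cycle''). The analysis would then be: (i) for each fixed $\sigma$, show that over the random structure the event $\sigma\notin G$ has probability $e^{-\Omega(k)}$, with the right constant in the exponent and uniformly over all cycle structures — including the worst case of many cycles of length just above $k$; (ii) a union bound over the $n!$ permutations and the $\binom n2+1$ single swaps then yields an auxiliary structure for which $G$ is a dominating set, the computation closing precisely at $k=\frac{n\ln\ln n}{\ln n}(1+o(1))$. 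The crux — and the main obstacle — is step (i): building a navigation routine that costs only $(1+o(1))\frac{n\ln\ln n}{\ln n}$ opens yet whose success probability is concentrated enough to beat a union bound over all $n!$ permutations; this concentration requirement, rather than mere existence of one good permutation per swap, is exactly what separates the achievable constant from the information-theoretic $\tfrac12$.
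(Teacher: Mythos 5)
Your lower bound is correct and essentially the paper's: you count directly that the map $\pi\mapsto\pi\cdot s(\pi)$ is at most $(\binom n2+1)$-to-one, whereas the paper phrases the same content as a reduction to a ``spy communicates one of $m$ messages'' model and then sets $m=\binom n2$. Both hinge on the same two ingredients, namely the \cite{locker_2006} optimality bound $|G|/n!\le p(n,k)$ and the asymptotic $p(n,n/u)=u^{-u(1+o(1))}$, and both land on $k\ge\frac{n\ln\ln n}{2\ln n}(1-o(1))$.

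The upper bound, however, has a genuine gap. Your diagnosis is right as far as it goes --- the naive $G$ (all permutations with cycles shorter than $k$) cannot be a dominating set under a single swap, so the prisoners' strategy must somehow handle permutations that \emph{do} contain long cycles --- but the construction you then sketch (a fixed shared random partition plus an adaptive routine with class-representative jumps, with the swap ``held in reserve'' for one residual obstruction) is not the paper's, is only heuristic, and you explicitly leave its central estimate unresolved. It also runs into your own objection in a new form: if the auxiliary structure is fixed in advance and unseen, a single swap still only acts very locally, and it is unclear why one swap suffices to repair the failure modes. The paper avoids all of this with a different idea you are missing: the prisoners first open a fixed prefix of $r=\lfloor n/\ln n\rfloor$ drawers, and the ordered contents of this prefix act as a \emph{message} that selects an entry from a strategy book of $\frac{n!}{(n-r)!}$ independent uniformly random permutations; the prisoners then run pointer-following on the remaining $n-r$ drawers with labels permuted by the selected entry. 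Because the spy's swap touches at least one of the first $r$ drawers, the spy can move the observed prefix to any of $rn(1-o(1))$ other values, each yielding an independent fresh uniform relabelling. The existence proof is then a clean first-moment bound: each choice succeeds with probability $p_n$, the initial assignment has no good swap with probability $(1-p_n)^{rn(1-o(1))}$, and $n!\cdot(1-p_n)^{rn(1-o(1))}<1$ once $p_n>\frac{(\ln n)^2}{n}(1+o(1))$, which holds at $k=\frac{n\ln\ln n}{\ln n}(1+o(1))$. This ``prefix as a controllable channel'' trick is what your proposal lacks; without it, your step (i) concentration claim would have to be established from scratch and is nowhere near proven.
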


We use $o(1)$ to denote a function that diminishes to zero as $n$ grows to infinity.

We prove this theorem in \Cref{sec:probabilistic} using a probabilistic argument.

Even though \Cref{theorem:probabilistic} shows the existence of a deterministic method that works for all initial permutations, it does not show how to construct such a solution efficiently, with time complexity polynomial in $n$. This is the scope of the next theorem:

\begin{restatable}{theorem}{deterministic}
\label{theorem:deterministic}
The spy and the $n$ prisoners have an efficient strategy such that for any initial permutation the spy makes a single swap and then all prisoners find their number by opening $O(\frac{n \log \log n}{\log n})$ drawers. The time complexity of the strategy is $O(n^2)$ for each participant.
\end{restatable}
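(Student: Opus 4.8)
The plan is to start from the strategy whose existence is established in the probabilistic proof of \Cref{theorem:probabilistic} and to make each of its three ingredients both explicit and polynomial‑time: the common strategy that the prisoners agree on in advance, the spy's choice of which pair to swap, and the prisoners' on‑line execution. I expect this to cost a constant factor in the number of drawers opened, which is exactly the gap between the tight $\frac{n\ln\ln n}{\ln n}(1+o(1))$ of \Cref{theorem:probabilistic} and the $O(\frac{n\log\log n}{\log n})$ claimed here.

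First I would pin down where the argument of \Cref{theorem:probabilistic} is non‑constructive. It produces a ``good'' set $G\subseteq S_n$ of permutations --- those on which the prisoners' decision trees make everybody succeed within the budget --- and shows $G$ is a dominating set in the transposition Cayley graph of $S_n$ (every permutation is one swap from $G$), but it pins down neither $G$ nor the decision trees, and it offers the spy nothing beyond brute‑force search. Note also that $G$ must be strictly larger than the set of permutations all of whose cycles are short: the single $n$‑cycle has no such neighbour (one transposition only splits it into parts, the larger of which exceeds $n/2$), so the strategy is necessarily a genuine refinement of plain pointer‑following and any explicit replacement must preserve this. For the prisoners' side I would substitute whatever shared random object the proof uses (a random partition into blocks, a family of ``landmark'' sets, a hash family, \dots) by an explicit pseudorandom one --- an expander, a combinatorial design, or a small‑bias / bounded‑independence family --- chosen so the domination property survives; the standard penalty for this is a constant factor, consistent with the target bound. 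With the shared object now a fixed, efficiently computable table, each prisoner's decision tree is explicit, and executing it is cheap: it has depth $O(\frac{n\log\log n}{\log n})$ and each step costs $O(n)$ lookups, so $O(\frac{n^2\log\log n}{\log n})=o(n^2)$ time per prisoner.

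The crux is the spy's algorithm: given $\pi$ it must output a transposition $\tau$ with $\pi\tau\in G$ in time $O(n^2)$, whereas the existence proof only guarantees that some such $\tau$ is there. The naive method --- loop over all $\binom{n}{2}$ candidate transpositions and, for each, simulate all $n$ prisoners to test membership in $G$ --- costs about $\Theta(n^4)$ and is hopeless. I would attack this along one of two lines. The first is locality: enumerate the candidates $(a,b)$ so that consecutive ones change $\pi\tau$ in only $O(1)$ positions, and observe that membership in $G$ is governed by the cycle structure of $\pi\tau$ together with the fixed shared object, so it can be maintained incrementally under single swaps (each swap being one cycle split or merge), amortising the test to $O(1)$ per candidate and giving $O(n^2)$ overall after an $O(n)$ computation of $\pi$'s cycle decomposition. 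The second, cleaner if it works, is to prove that a valid $\tau$ can be read off directly --- the shared object prescribing, for each long cycle of $\pi$, a point at which splitting it (or a pair of cycles to merge) lands in $G$ --- which would even yield a near‑linear spy. Showing that a good swap can be \emph{found}, and not merely shown to exist, within the $O(n^2)$ budget while the number of opened drawers stays $O(\frac{n\log\log n}{\log n})$ is the step I expect to be hard; the rest is bookkeeping plus the usual cost of derandomising a probabilistic construction.
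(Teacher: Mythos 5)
Your proposal is a plan, not a proof: you correctly locate the hard steps and then explicitly leave them open (``I would substitute \dots by an explicit pseudorandom one,'' ``the step I expect to be hard''), and the high-level framing you adopt does not match any workable strategy, including the paper's. You model the problem as finding a dominating set $G\subseteq S_n$ in the transposition Cayley graph and asking the spy to output $\tau$ with $\pi\tau\in G$. But the paper's strategy (already in the probabilistic proof of \Cref{theorem:probabilistic}) is structurally different: all prisoners always open drawers $1,\dots,r$; the \emph{contents} of those drawers select a permutation $\pi_1\in S_{n-r}$ used to relabel the remaining drawers before pointer-following; and the spy's one swap is made \emph{inside the first $r$ drawers only}, so it never alters the cycle structure of the remaining $n-r$ values --- it is purely a communication channel that changes which $\pi_1$ is selected. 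Your observation about the single $n$-cycle having no small-cycle neighbour is true but beside the point under this scheme, and the set of ``good'' drawer arrangements does not have the clean structure (incrementally maintainable under single swaps, or readable-off) that both of your proposed spy algorithms assume.

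Concretely, two constructions are missing, and they are the actual content of the theorem. First, an encoding scheme $f_0:S_r\to[m]$ whose output the spy can set to any target value by a single swap among the first $r$ drawers: the paper builds it from (i) partitioning the $r$ entries into triples, each contributing a parity bit, so a swap across two triples flips exactly two chosen bits (\Cref{theorem:swap2flips}), and (ii) a Hamming-syndrome style map in which flipping one bit of a length-$2^a$ vector XORs its index into the output, run twice to get $m\approx(d/4)^2$ messages from two bit flips (\Cref{theorem:flips2message}). Second, an explicit \emph{small} family $F\subset S_{n-r}$ of cycle-breaking permutations, each a product of at most $2u$ transpositions, such that every permutation can be composed with some member of $F$ to kill all cycles longer than $(n-r)/u$: the paper gets the transpositions from the edge set of a Ramanujan graph on $n-r$ vertices (so any two large vertex subsets share many edges, hence every long cycle can be chopped), and then iterates the Ramanujan construction --- treating edges of one graph as vertices of the next --- to assemble $\le 2u$ compatible transpositions per family member while keeping $|F|\le 8n\cdot(4u)^{16u+4}$. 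Matching $|F|$ against $m\approx(r/12)^2$ with $16u+4=(1-\epsilon)\log n/\log\log n$ and $r=O(n^{1-\epsilon/2})$ gives the $O(n\log\log n/\log n)$ bound. Your guess that expanders are involved is in the right direction, but none of the specific machinery --- the swap-to-parity-flip encoding, the Hamming-syndrome trick, the expander mixing lemma application, or the iterated-Ramanujan selection of $2u$ transpositions --- appears in your argument, and without them there is no explicit strategy and no $O(n^2)$ spy.
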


Constructing such a strategy is much harder, requiring the development of several new tools. This work is described in \Cref{sec:deterministic}, which spans most of this paper.

\section{Existence and optimality}
\label{sec:probabilistic}
In this section we prove \Cref{theorem:probabilistic} by providing a lower bound and an upper bound on the minimal number of drawers needed for allowing all prisoners to find their number regardless of the original permutation. Recall that the strategy includes sending a spy to inspect all drawers and swap two numbers.

We use the following well-known result:
\begin{lemma}
\label{lemma:dickman}
As $n$ grows to infinity, if $u = O(n ^ {1/2 - \epsilon})$ for some $\epsilon > 0$, the probability that a uniformly random permutation of $n$ elements does not have a cycle larger than $\frac{n}{u}$ is
$$\rho(u) \cdot (1 + o(1)) = u ^ {-u (1 + o(1))}$$
where $\rho(u)$ is the Dickman function.
\end{lemma}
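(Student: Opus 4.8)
The statement is classical, so the plan is to indicate the standard route rather than carry it out in full. Write $m=\lfloor n/u\rfloor$ and let $p(n,m)$ be the probability in question, that is, the fraction of permutations of $[n]$ all of whose cycles have length at most $m$. The first step is the exact recursion
\[
n\,p(n,m)=\sum_{\ell=1}^{m}p(n-\ell,m),\qquad p(n,m)=1\ \text{for}\ 0\le n\le m ,
\]
obtained by conditioning a uniform random permutation on the length $\ell$ of the cycle through a fixed element (that length is uniform on $\{1,\dots,n\}$, and conditioning on it leaves an independent uniform permutation on the remaining $n-\ell$ elements). It is convenient to also record the equivalent generating-function identity: the exponential generating function of these permutations is $\exp\!\big(\sum_{k\le m}z^k/k\big)=e^{-E_m(z)}/(1-z)$ with $E_m(z)=\sum_{k>m}z^k/k$, so that $p(n,m)=\sum_{0\le j\le n}[z^j]e^{-E_m(z)}$.

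The second step is to recognize that, with $u=n/m$, this recursion is a discretization of the delay differential equation $u\rho'(u)=-\rho(u-1)$ with $\rho\equiv1$ on $[0,1]$, which defines the Dickman function, and to upgrade this observation to the multiplicative statement $p(n,m)=\rho(u)\,(1+o(1))$. I would do this either by a saddle-point analysis of $e^{-E_m(z)}/(1-z)$ or by a direct induction on the recursion that propagates the approximation together with an explicit error bound; either way the hypothesis $u=O(n^{1/2-\epsilon})$ — equivalently $m\ge n^{1/2+\epsilon}$ — is precisely what keeps the accumulated discretization error negligible against $\rho(u)$ uniformly over the range of $u$. I expect this uniform error control to be the main obstacle in a self-contained proof; it is the reason the result is best quoted here as the permutation analogue of the classical Dickman--de Bruijn estimates for smooth numbers.

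It is worth isolating which half is elementary. The upper bound $p(n,m)\le u^{-u(1+o(1))}$ needs no analysis: revealing the cycles of a uniform random permutation one at a time produces a size-biased sequence of cycle lengths $L_1,L_2,\dots$ with each $L_i$ uniform on the not-yet-seen elements, and since every cycle being at most $m$ forces at least $\lfloor u\rfloor$ cycles, and on the event $L_1,\dots,L_{i-1}\le m$ at least $(u-i+1)m$ elements remain before step $i$, one gets $p(n,m)\le\prod_{i=1}^{\lfloor u\rfloor-1}\tfrac{1}{u-i+1}\le\tfrac{1}{\lfloor u\rfloor!}$, which is $u^{-u(1+o(1))}$ by Stirling as $u\to\infty$. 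The matching \emph{lower} bound is genuinely non-elementary, which is instructive: the naive heuristic that treats the cycle counts as independent Poisson$(1/k)$ variables predicts $p(n,m)\approx e^{-\sum_{k>m}1/k}=1/u$, which vastly overshoots $\rho(u)\approx u^{-u}$, because forbidding all long cycles forces their ``mass'' into shorter cycles that have very little room — the packing effect the Dickman function encodes. Finally, the purely analytic estimate $\rho(u)=u^{-u(1+o(1))}$ (as $u\to\infty$, the regime in which the equality is substantive) reduces to $\log\rho(u)=-u\log u\,(1+o(1))$, which follows from de Bruijn's asymptotic $\log\rho(u)=-u(\log u+\log\log u-1+o(1))$ — obtained by a saddle-point evaluation of the Laplace transform $\widehat\rho(s)=\exp\!\big(\gamma+\int_0^s\frac{e^{-t}-1}{t}\,dt\big)$, or by bootstrapping the delay equation — since $u\log\log u=o(u\log u)$. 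Combining this with the approximation of the second step gives the lemma.
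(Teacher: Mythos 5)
Your proposal follows the same underlying route as the paper — both rest on the classical Dickman--de~Bruijn theory of the largest cycle length in a random permutation — but the paper's proof is a pure citation (one reference for $p(n,n/u)=\rho(u)(1+o(1))$, another for $\rho(u)=u^{-u(1+o(1))}$), whereas you unpack the machinery. Your setup is correct: the recursion $n\,p(n,m)=\sum_{\ell\le m}p(n-\ell,m)$ and the equivalent EGF identity $\sum_n p(n,m)z^n=e^{-E_m(z)}/(1-z)$ with $E_m(z)=\sum_{k>m}z^k/k$ are the right starting points, and you correctly locate the hypothesis $u=O(n^{1/2-\epsilon})$ as the uniformity condition that keeps the discretization error under control when passing from the recursion to the delay equation $u\rho'(u)=-\rho(u-1)$. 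The genuine addition over the paper is your elementary, self-contained upper bound via size-biased revelation of cycles: conditioning on $L_1,\dots,L_{i-1}\le m$ leaves at least $(u-i+1)m$ elements, so $P(L_i\le m\mid\cdot)\le 1/(u-i+1)$, giving $p(n,m)\le\prod_{i=1}^{\lfloor u\rfloor-1}\frac{1}{u-i+1}\le\frac{1}{\lfloor u\rfloor!}=u^{-u(1+o(1))}$ with no Dickman machinery at all; the paper has nothing like this, and it proves one of the two displayed quantities from scratch. Your remark that the naive independent-Poisson heuristic $e^{-\sum_{k>m}1/k}\approx 1/u$ overshoots wildly is also a sound diagnosis of why the result is not a union-bound triviality. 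Like the paper, you do not carry out the lower-bound direction $p(n,m)\ge\rho(u)(1-o(1))$ (you are explicit that it needs a saddle-point or an inductive error-propagation argument), and you quote de~Bruijn's asymptotic $\log\rho(u)=-u(\log u+\log\log u-1+o(1))$ for the $\rho(u)=u^{-u(1+o(1))}$ estimate. So the two proofs lean on the same classical results; yours is more expository and strictly more self-contained on the upper-bound side, while remaining a citation at heart for the matching lower bound, exactly as the paper is.
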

\begin{proof}
The estimate $\rho(u) \cdot (1 + o(1))$ follows from Theorem 3 in \cite{random_permutation}. Note that the condition there is satisfied by the fact that $u = O(n ^ {1/2 - \epsilon})$ for some $\epsilon > 0$.

The estimate $u ^ {-u (1 + o(1))}$ is proven, for example, in \cite{integers}. Note that the $o(1)$ in the exponent here dominates the multiplicative $o(1)$ in $\rho(u) \cdot (1 + o(1))$.

\end{proof}

\subsection{A lower bound}
To lower bound the number of drawers opened in the worst case we consider the following variation of the problem: after inspecting the drawers, the spy cannot make a swap or any other change. Instead the spy communicates one of $m$ possible messages to all prisoners.

We prove the following Lemma.
\begin{lemma}
\label{lemma:communication_lower_bound}
If the spy is allowed to communicate one of $m$ messages, the $n$ prisoners must open at least
\begin{equation}
\label{eq:communication_lower_bound}
\frac{n \ln \ln m}{\ln m} (1 + o(1))
\end{equation}
drawers to find their number in the worst case.
\end{lemma}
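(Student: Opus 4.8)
The plan is to reduce the worst-case guarantee to a counting statement and then to bound the success probability of any fixed non-communicating strategy profile by the probability that a random permutation has no long cycle. Concretely, fix any valid strategy: a spy map $\sigma\colon S_n \to [m]$ together with, for each message $s \in [m]$ and prisoner $i \in [n]$, a deterministic adaptive rule $\tau_{s,i}$ that opens at most $k$ drawers. (Shared randomness may be assumed away, since the worst-case requirement forces success with probability $1$ for every permutation, so a good value of the seed can be fixed once and for all.) For each $s$ let $A_s \subseteq S_n$ be the set of permutations on which \emph{every} prisoner following $\tau_{s,i}$ locates their own number. The worst-case requirement says exactly that for every $\pi$ the profile indexed by $\sigma(\pi)$ works on $\pi$, hence $\bigcup_{s \in [m]} A_s = S_n$. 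By averaging, some message $s^{\ast}$ satisfies $|A_{s^{\ast}}| \ge n!/m$; that is, a uniformly random permutation lies in $A_{s^{\ast}}$ with probability at least $1/m$.

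The second ingredient is an upper bound on $\Pr_\pi[\pi \in A_s]$ valid for \emph{any} non-communicating profile with at most $k$ openings per prisoner. I would invoke the optimality of the pointer-following strategy: among all such profiles, the probability that all $n$ prisoners succeed is maximized by pointer-following, and that maximum equals the probability that a uniformly random permutation of $[n]$ has no cycle of length exceeding $k$. This is the content of \cite{locker_2006} for $k = n/2$; the statement for arbitrary $k$ is the key structural input, and I expect verifying (or re-deriving) it to be the main obstacle in the proof — if only the $k=n/2$ case is available in the literature, one would need either a generalized optimality proof or a direct, possibly lossy, bound on $\Pr_\pi[\pi\in A_s]$ that is still strong enough for the target asymptotics. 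Granting it, $\Pr_\pi[\pi \in A_s] \le \Pr_\pi[\text{no cycle of length} > k]$ for every $s$.

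Combining the two ingredients with \Cref{lemma:dickman} finishes the argument. Set $u \coloneqq n/k$. We may assume $k$ lies in the range where $u = O(n^{1/2-\epsilon})$: otherwise $\Pr_\pi[\text{no cycle}>k]$ is already super-polynomially small, which is incompatible with it being at least $1/m$ for the parameter ranges in which the conclusion is non-trivial, so there is nothing to prove. In that range \Cref{lemma:dickman} gives $\Pr_\pi[\text{no cycle}>k] = \rho(u)(1+o(1)) = u^{-u(1+o(1))}$. Hence $u^{-u(1+o(1))} \ge 1/m$, so taking logarithms, $u\ln u \le (1+o(1))\ln m$. Solving this asymptotically — the ansatz $u$ of order $\ln m/\ln\ln m$ gives $\ln u = (1+o(1))\ln\ln m$, hence $u \le (1+o(1))\,\ln m/\ln\ln m$ — yields $k = n/u \ge \tfrac{n\ln\ln m}{\ln m}(1+o(1))$, matching \eqref{eq:communication_lower_bound}. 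Beyond routine estimates the only delicate points are the composition of the several $o(1)$ terms (the one coming from \Cref{lemma:dickman} sits in the exponent and dominates the multiplicative one) and the handling of randomized or non-uniform spy behaviour in the reduction, both of which are straightforward.
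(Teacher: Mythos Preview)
Your proposal is correct and follows essentially the same approach as the paper: fix a message, bound the fraction of permutations on which the corresponding strategy profile succeeds by the no-long-cycle probability via \cite{locker_2006}, then invert the Dickman asymptotics from \Cref{lemma:dickman}. Your version is a bit more explicit about the averaging step and the handling of the $o(1)$ terms, and your flagged concern about whether \cite{locker_2006} covers general $k$ (not just $k=n/2$) is reasonable caution, but the paper simply cites it as known for general $k$.
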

\begin{proof}
Let $k$ be the number of drawers opened in the worst case. We fix a specific message and inspect the strategy used by the prisoners after receiving this message from the spy. \cite{locker_2006} showed that such a strategy, if applied to all possible assignments of numbers to drawers, works only for a portion of them that is upper bounded by the probability that a random permutation has no cycles larger than $k$, which is given by \Cref{lemma:dickman}. In other words, to cover all assignments the number of communicated values must satisfy:
$$m \ge u ^ {u (1 + o(1))}$$
where $u = n / k$. Taking $\ln(\cdot)$ we get
$$\ln m \ge (1 + o(1)) u \ln u$$
which implies:
$$k = n / u \ge \frac{n \ln \ln m}{\ln m} (1 + o(1))$$
as required.
\end{proof}

We now use the communication scenario as a lower bound for a swap (and no communication).

\begin{corollary}
\label{cor:lower_bound}
If the spy is allowed to make a single swap, the $n$ prisoners must open at least $\frac{n \ln \ln n}{2 \ln n} (1 + o(1))$ drawers to find their number in the worst case.
\end{corollary}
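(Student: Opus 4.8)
The plan is to reduce the single-swap scenario to the communication scenario of \Cref{lemma:communication_lower_bound} with $m \le n^2$ messages, and then substitute $m = n^2$ into \eqref{eq:communication_lower_bound}. The guiding observation is that a swap carries very little information: the spy's only freedom is which of the $\binom{n}{2}$ transpositions to apply (plus, if one wishes to be generous, the option of doing nothing), so there are at most $n^2$ distinct ``actions'' available — exactly the regime governed by the communication lower bound.

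To make the reduction precise I would show that any worst-case-$k$ strategy for the spy-and-swap problem yields a worst-case-$k$ strategy for the communication problem with $m = \binom{n}{2}+1$. Given the initial permutation $\pi$, let $\tau = \tau(\pi)$ be the transposition the swap-spy would perform, so the post-swap arrangement is $\pi' = \pi \circ \tau$ when one reads ``swap the contents of drawers $a$ and $b$'' as precomposition by the transposition $(a\,b)$ (under this convention $\pi'(d) = \pi(\tau(d))$). Let the communication-spy broadcast the index of $\tau$, and let $S$ be the fixed pointer-following-type strategy the prisoners run after the swap. A prisoner who has been told $\tau$ can faithfully simulate $S$ on $\pi'$ without ever seeing $\pi'$ itself: whenever $S$ instructs them to open drawer $d$, they instead open the real drawer $\tau(d)$, whose content $\pi(\tau(d)) = \pi'(d)$ is exactly what $S$ expects. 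When the simulated run of $S$ halts, having located the drawer $d^\ast$ with $\pi'(d^\ast) = i$, the real drawer just opened is $\tau(d^\ast)$, which contains $\pi(\tau(d^\ast)) = \pi'(d^\ast) = i$ — the prisoner's own number. The number of drawers opened is unchanged, and correctness for every $\pi$ is inherited from the swap strategy.

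Having established the reduction, I would invoke \Cref{lemma:communication_lower_bound} with $m \le n^2$. Since $\ln m \le 2\ln n$ and $\ln\ln m = \ln\ln n\,(1+o(1))$ (the additive $O(1)$ from $m = \Theta(n^2)$ being absorbed into the $o(1)$), the bound \eqref{eq:communication_lower_bound} becomes
$$k \;\ge\; \frac{n\ln\ln m}{\ln m}\,(1+o(1)) \;\ge\; \frac{n\ln\ln n}{2\ln n}\,(1+o(1)),$$
which is the claimed lower bound.

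The conceptual crux is simply recognizing this reduction; the one place calling for care is the bookkeeping in the simulation — getting the direction of composition right so that ``open drawer $\tau(d)$'' is the correct translation of ``open drawer $d$'', and confirming that the drawer where the simulated $S$ terminates really does hold the prisoner's number in the original arrangement. The remaining ingredients — the crude count $m \le n^2$ of possible swap actions and the asymptotic comparison of $\ln\ln m$ with $\ln\ln n$ — are routine and were, in effect, already isolated inside \Cref{lemma:communication_lower_bound}.
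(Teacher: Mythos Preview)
Your proposal is correct and follows essentially the same approach as the paper: reduce the swap scenario to the communication scenario of \Cref{lemma:communication_lower_bound} by having the spy transmit the index of the chosen transposition, let the prisoners simulate the post-swap strategy by relabeling drawers, and substitute $m = \Theta(n^2)$ into \eqref{eq:communication_lower_bound}. You are more explicit than the paper about the mechanics of the simulation (the direction of composition and why opening $\tau(d)$ reproduces $\pi'(d)$), which is fine but not strictly needed at this level of detail.
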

\begin{proof}
Assume, for the sake of contradiction, that a better strategy is possible. Use this strategy for the communication scenario of \Cref{lemma:communication_lower_bound} with $m = \binom{n}{2}$. The spy indicates which pair of drawers should be swapped by sending their indices, and the prisoners can then follow their strategy imagining that the corresponding drawers were swapped.

Substituting $m = \binom{n}{2} = \frac{n^2}{2} (1 + o(1))$ into equation \Cref{eq:communication_lower_bound} gives the required bound.
\end{proof}

Note that \Cref{cor:lower_bound} proves the optimality claim of \Cref{theorem:probabilistic}.

\subsection{An upper bound}
We next prove the existence of a strategy for \Cref{theorem:probabilistic}. The strategy will have the following form: All prisoners open the drawers labeled $1$ to $r$, where $r$ is chosen in advance. The numbers in these $r$ drawers (in their specific order) are used to choose a permutation $\pi$ of all $n$. In other words, the strategy consists of a book of $\frac{n!}{(n-r)!}$ permutations, one for each sequence of values seen in the first $r$ drawers. Then the prisoners use the pointer-following strategy with the drawer labels permuted according to $\pi$.

The spy can influence the chosen permutation by swapping a pair of drawers that includes at least one of the first $r$ drawers. The spy can inspect the largest cycle of the permutation obtained by each of these possible swaps, and choose the one that minimizes the size of the largest cycle.

We now prove the following lemma:
\begin{lemma}
\label{lemma:random_strategy}
For $r = \left\lfloor \frac{n}{\ln n} \right\rfloor$, the $\frac{n!}{(n-r)!}$ permutations in the strategy book can be chosen such that the spy can use a single swap to obtain a permutation with no cycles larger than $\frac{n \ln \ln n}{\ln n} (1 + o(1))$.
\end{lemma}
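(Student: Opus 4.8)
The plan is to use the probabilistic method: pick the $\frac{n!}{(n-r)!}$ permutations in the book independently and uniformly at random (one per sequence of values in the first $r$ drawers), and show that with positive probability the resulting book has the desired property for \emph{every} initial permutation $\sigma$ of the drawers. The key observation is that the spy has many swaps available: for a fixed $\sigma$, any swap of drawer $j \le r$ with drawer $j' $ (for $j' \ne j$) changes the sequence of values seen in the first $r$ drawers, hence selects a \emph{different} entry of the book, and these entries are independent uniform permutations. So the spy effectively gets to choose the best of roughly $\Omega(nr)$ i.i.d.\ uniform permutations $\pi$, and then applies pointer-following with labels permuted by $\pi$ — i.e.\ all prisoners succeed iff $\pi$ (equivalently, the composition of $\pi$ with the drawer-content permutation) has no cycle longer than the target length $\ell := \frac{n \ln\ln n}{\ln n}(1+o(1))$.

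First I would set $u = n/\ell = \frac{\ln n}{\ln\ln n}(1+o(1))$, so that by \Cref{lemma:dickman} a single uniform random permutation has no cycle exceeding $\ell$ with probability $\rho(u)(1+o(1)) = u^{-u(1+o(1))}$. Taking logs, $-\ln \rho(u) = u\ln u\,(1+o(1)) = \frac{\ln n}{\ln\ln n}\cdot(\ln\ln n)\,(1+o(1)) = (\ln n)(1+o(1))$, so this probability is $n^{-1+o(1)}$. Next I would argue that for a \emph{fixed} $\sigma$, the set of book entries reachable by a legal swap (one endpoint among the first $r$ drawers) contains a collection of $N = \Omega(nr) = \Omega(n^2/\ln n)$ \emph{distinct} entries, whose associated permutations are therefore independent; hence the probability that \emph{all} of these $N$ reachable permutations have a cycle longer than $\ell$ is at most $\big(1 - n^{-1+o(1)}\big)^{N} \le \exp\!\big(-N \cdot n^{-1+o(1)}\big) = \exp\!\big(-n^{1-o(1)}\big)$. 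Finally, a union bound over the $n! \le \exp(n\ln n)$ choices of $\sigma$ gives failure probability at most $\exp(n\ln n)\cdot \exp(-n^{1-o(1)}) \to 0$, so a good book exists. (One must choose the $o(1)$ term in $\ell$ to tend to zero slowly enough that $n^{-1+o(1)}$ really dominates $\ln n$ with room to spare — e.g.\ $\ell = \frac{n\ln\ln n}{\ln n}(1+\epsilon_n)$ with $\epsilon_n \to 0$ and $\epsilon_n \ln n \to \infty$ — which is exactly where the $(1+o(1))$ in the statement is spent.)

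The main obstacle is the \textbf{independence/distinctness bookkeeping}: I need that the swaps available to the spy genuinely index distinct entries of the book (so that the $N$ permutations are mutually independent, not the same random variable counted many times), and I need the count $N$ to be large enough to beat the $\exp(n\ln n)$ union-bound factor. Swapping drawer $j\le r$ with drawer $j'>r$ changes exactly position $j$ of the length-$r$ prefix to $\sigma(j')$, so different choices of $(j,j')$ can collide only in controlled ways; a clean way to get many guaranteed-distinct entries is to fix $j=1$ and vary $j'$ over all $n-r$ drawers outside the prefix whose contents are pairwise distinct — that alone gives $N \ge n - r = n(1-o(1))$ distinct entries, and $\exp(-(n(1-o(1)))\cdot n^{-1+o(1)}) = \exp(-n^{o(1)})$, which is \emph{not} enough against $\exp(n\ln n)$. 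So I do need to harvest $N = n^{1+\Omega(1)}$ (ideally $\Omega(nr)$) distinct entries, which requires combining swaps across many positions $j\le r$ and verifying the resulting prefixes are distinct; this is the delicate step. An alternative, if the distinctness argument gets awkward, is to enlarge $r$ slightly (the statement only fixes $r=\lfloor n/\ln n\rfloor$, but one has slack in the target cycle length) or to observe that even swaps with \emph{both} endpoints in the prefix, plus swaps with one endpoint outside, together give $\Theta(nr)$ legal swaps, of which a constant fraction must land on distinct prefixes by a simple counting argument. Once the distinctness count is in hand, the rest is the routine two-line union bound sketched above.
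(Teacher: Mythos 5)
Your proposal matches the paper's proof in all essentials: choose the book entries i.i.d.\ uniformly at random, count the $\Theta(rn)$ distinct reachable prefixes, apply \Cref{lemma:dickman} with $u = n/k \approx \ln n / \ln\ln n$, and close with a union bound over all $n!$ initial permutations (which is exactly the paper's first-moment argument, $n!\,(1-p_n)^{rn(1-o(1))} < 1$). The distinctness step you flag as delicate is actually immediate: a swap of drawer $j\le r$ with drawer $j'>r$ alters only position $j$ of the prefix, a swap with $j<j'\le r$ alters exactly positions $j$ and $j'$, and since all $n$ values are distinct these $r(n-r)+\binom{r}{2}$ prefixes are pairwise different, so the corresponding book entries are genuinely independent and the $\Theta(rn)$ count you want holds verbatim; the only place the paper invests more care than you do is in pinning down the Dickman exponent against the $\frac{(\ln n)^2}{n}$ threshold so as to conclude that $f_n$ can indeed be taken $o(1)$.
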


We note that the existence part of \Cref{theorem:probabilistic} follows immediately from \Cref{lemma:random_strategy} since the total number of drawers opened in the worst case is $r$ plus the size of the largest cycle, which is:
$$r + \frac{n \ln \ln n}{\ln n} (1 + o(1)) = \left\lfloor \frac{n}{\ln n} \right\rfloor +  \frac{n \ln \ln n}{\ln n} (1 + o(1)) = \frac{n \ln \ln n}{\ln n} (1 + o(1))$$

We now prove \Cref{lemma:random_strategy}.
\begin{proof}[Proof of \Cref{lemma:random_strategy}]
Suppose we require all cycles to be not larger than a certain bound $k$ we will choose later. Denote by $p_n$ the probability that a random permutation of $n$ elements satisfies this condition.

Suppose all permutations in the strategy book were chosen uniformly at random from all possible permutations. The probability that composing a given permutation (given by the initial assignment) with a certain permutation in the book yields a permutation with no cycles larger than $k$ is $p_n$, since composing with a uniformly random permutation yields a uniformly random permutation.

We note that there are $r(n-r) + \binom{r}{2}$ reachable values for the first $r$ drawers by making a single swap. Since the lemma requires $r = \left\lfloor \frac{n}{\ln n} \right\rfloor$, the number of reachable values is $rn (1 - o(1))$. The probability of each of these options to succeed is independent, so the overall probability that a certain initial assignment does not have a good swap is:
$$\parens{1 - p_n} ^ {rn (1 - o(1))}$$

By linearity of expectation, the expected number of initial assignments not having a good swap is:
$$n! \cdot \parens{1 - p_n} ^ {rn (1 - o(1))}$$

We require the expected number of initial permutations without a good swap to be less than one, ensuring a positive probability of meeting the requirement for all initial assignments, thereby proving the existence of the required strategy.

We apply $\ln(\cdot)$ on both sides of the inequality
$$n! \cdot \parens{1 - p_n} ^ {rn (1 - o(1))} < 1$$
and use the facts that $n! \le n^n$ and $\ln(1 - x) \le -x$ to obtain the sufficient condition:
$$p_n r (1 - o(1)) > \ln n$$

Recalling that the lemma set $r = \floor{\frac{n}{\ln n}}$, the requirement is that:
\begin{equation}
\label{eq:prob_required}
p_n > \frac{(\ln n)^2}{n} (1 + o(1))
\end{equation}
for some $o(1)$ function.

Our goal is to upper bound the cycle size by $k = n / u = \frac{n \ln \ln n}{\ln n} (1 + f_n)$ where $f_n$ is an $o(1)$ function, as required by the statement of \Cref{lemma:random_strategy}, and still meet condition \Cref{eq:prob_required} to ensure that the permutations in the strategy book can be chosen to meet the requirement.

We use \Cref{lemma:dickman} to bound $p_n$:
$$p_n = u ^ {-u (1 + o(1))} > (\ln n) ^ {-u (1 + o(1))} = n ^ {-\frac{1 + o(1)}{1 + f_n}}$$

The inequality is due to the fact that $u = \frac{\ln n}{\ln \ln n} (1 + o(1))$ is smaller than $\ln n$ for large enough $n$.

Substituting \Cref{eq:prob_required}, the requirement is:
$$n ^ {-\frac{1 + o(1)}{1 + f_n}} \ge \frac{(\ln n)^2}{n} (1 + o(1))$$
taking $\ln(\cdot)$ we get
$$-\frac{(1 + o(1)) \ln n}{1 + f_n} \ge 2 \ln \ln n - \ln n + \ln (1 + o(1)) = 2 \ln \ln n - \ln n + o(1)$$
which simplifies into
$$1 + f_n \ge \frac{(1 + o(1)) \ln n}{\ln n - 2 \ln \ln n - o(1)}$$
to finally obtain
$$f_n \ge \frac{o(1) \ln n + 2 \ln \ln n + o(1)}{\ln n - 2 \ln \ln n - o(1)}.$$
The right hand side is $o(1)$ function so we can choose $f_n = o(1)$ that satisfies this inequality, proving the existence of a strategy with cycles not larger than $\frac{n \ln \ln n}{\ln n} (1 + f_n)$ with $f_n = o(1)$, as promised.

\end{proof}

This concludes the proof of \Cref{theorem:probabilistic}. We note that the strategy given in this section is not efficient. It consists of a book of size
$$\frac{n!}{(n-r)!} \approx n ^ r \approx e ^ n$$

Besides the fact that just describing the strategy requires an exponential amount of space, it is not clear how to construct this strategy deterministically even in exponential time. The next section explicitly provides an efficient strategy.

\section{Efficient solution}
\label{sec:deterministic}
This section provides an explicit strategy that is efficient, in the sense that its time complexity is polynomial in the input length. In \Cref{sec:scheme} we describe the general idea of the efficient solution, while in \Cref{sec:encoding} and \Cref{sec:breaking} we explain two major components in the solution. We then conclude by combining the pieces together and analyzing the performance of the solution. We note that the number of drawers required by this solution is slightly larger than the randomly-constructed strategy given in \Cref{sec:probabilistic}.

\subsection{The scheme}
\label{sec:scheme}
As in the randomly-constructed strategy, all prisoners always open the first $r$ drawers. Prisoners that find their numbers in these drawers are successful and leave the room. The rest of the prisoners all know the numbers in the first $r$ drawers. They renumber these values $1, ..., r$ such that $1$ corresponds to the minimal value in these drawers and so on up to $r$ that corresponds to the maximal value. This way they recover a permutation of $r$ elements: $\pi_0 \in S_r$. Similarly, they renumber all numbers missing from the first $r$ drawers $1, ..., (n-r)$. They use the opened $r$ drawers to obtain a permutation $\pi_1$ of $n-r$ numbers:
$$\pi_1 = f(\pi_0) \in S_{n-r}$$
where $f$ is a function defined by the strategy. They then use the classical pointer-following strategy on the $n-r$ remaining drawers, where drawers are renumbered $1, ..., (n-r)$, and so are the numbers in the drawers. The permutation $\pi_1$ is used to permute the drawer labels.

More explicitly, let $T \subset \{1, ..., n\}$ be the set of $n-r$ numbers missing from the first $r$ drawers. Let $h_T: T \to \{1, ..., n-r\}$ be a function that for each number in $T$ returns its position among the numbers in $T$. Then, if the $i$-th prisoner failed to find their number in the first $r$ drawers, they open box number $r + \pi_1(h_T(i))$ to obtain a number $i' \in T$. If $i' = i$, the prisoner is successful and leaves the room. Otherwise, the prisoner moves on to open the box that prisoner $i'$ would have opened, that is $r + \pi_1(h_T(i'))$.

The core of the strategy is the construction of a function $f$ that the spy can use to eliminate large cycles in the pointer-following phase. We break the function $f: S_r \to S_{n-r}$ into two stages:
$$f = f_1 \circ f_0$$
$$f_0: S_r \to [m]$$
$$f_1: [m] \to S_{n-r}$$
where $m$ is a parameter to be chosen later and $[m] = \{1, ..., m\}$.

The first function $f_0$ maps $\pi_0$ into an index out of $m$ options. In \Cref{sec:encoding} we construct $f_0$ such that for any initial permutation $\pi_0^\textrm{init}$ and a target index $m_0 \in [m]$, there exists a transposition of $\pi_0^\textrm{init}$ such that $f_0$ gives the desired index. Formally:
$$\forall \pi_0^\textrm{init} \in S_r: \forall m_0 \in [m]: \exists (i,j) \in [r]^2: f_0(T_{i \leftrightarrow j}(\pi_0^\textrm{init})) = m_0$$
where $T_{i \leftrightarrow j}$ is the transposition of $i$ and $j$ (that is, a permutation that swaps $i$ and $j$ and leaves all other values unchanged). This property allows the spy to choose the output of $f_0$ by swapping the content of a pair of drawers out of the first $r$ drawers.

The second function $f_1$ maps the index to a permutation in a set $F$ of up to $m$ permutations of $n-r$ elements. In \Cref{sec:breaking} we construct a set $F \subset S_{n-r}$ such that any permutation of $n-r$ elements $\pi_1^\textrm{init}$ can be composed with a permutation in the set to obtain a permutation with no cycle larger than a certain bound $k$. Formally:
$$\forall \pi_1^\textrm{init} \in S_{n-r}: \exists \pi_1' \in F: \mathcal{L}_{\text{max}} (\pi_1^\textrm{init} \circ \pi_1') \le k$$
where $\mathcal{L}_{\text{max}}(\pi)$ denotes the length of the longest cycle in a permutation $\pi$.

The functions $f_0$ and $f_1$, having the mentioned properties, allow the spy to choose a swap such that all prisoners find their number by opening no more than $r + k$ drawers: the first $r$ drawers and up to $k$ of the remaining drawers. We should emphasize that the swap is made between the first $r$ drawers, so the set of numbers in the first $r$ drawers is not changed by the swap, as does the permutation $\pi_1$ describing the remaining $n-r$ drawers after renumbering.

Clearly, the construction cannot work for all possible values of $r, m, k$. The constructions in \Cref{sec:encoding} and \Cref{sec:breaking} impose conditions on these numbers.

\subsection{Encoding a message using a swap}
\label{sec:encoding}
We now construct the function $f_0: S_r \to [m]$ such that the output of $f_0$ can be fully controlled using a single swap on the input permutation.

We break $f_0$ into two stages, first transforming the permutation to a vector of $d = \floor{\frac{r}{3}}$ bits, and then transforming the vector into one of $m$ numbers, where $m$ is at least $\frac{d^2}{16}$. Formally:
$$f_0 = g_1 \circ g_0$$
$$g_0: S_r \to \{0, 1\}^d$$
$$g_1: \{0, 1\}^d \to [m]$$

\begin{theorem}
\label{theorem:swap2flips}
For any positive integer $r$, there is a function $g_0: S_r \to \{0, 1\}^d$ with $d = \floor{\frac{r}{3}}$ such that flipping any two bits in the output of $g_0$ is possible by making a single swap on the input permutation. Formally\footnote{We use $\oplus$ to denote the bitwise XOR operation on binary vectors.}:
\begin{multline*}
\forall \pi_0^\mathrm{init} \in S_r: \forall (i_0,i_1) \in [d]^2: \exists (j_0,j_1) \in [r]^2:\\
g_0(T_{j_0 \leftrightarrow j_1}(\pi_0^\mathrm{init})) = g_0(\pi_0^\mathrm{init}) \oplus e_{i_0} \oplus e_{i_1}
\end{multline*}
where $e_i \in \{0, 1\}^d$ is the vector that has $1$ only in its $i$-th coordinate.
\end{theorem}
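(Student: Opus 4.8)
The plan is to build $g_0$ out of disjoint blocks of three positions, one block per output bit. Partition $\{1,\dots,r\}$ into $d=\floor{r/3}$ consecutive triples $B_\ell=\{3\ell-2,\,3\ell-1,\,3\ell\}$ (the remaining at most two positions play no role), and let the $\ell$-th bit of $g_0(\pi)$ be the parity of the sub-permutation $\pi$ induces on $B_\ell$, i.e.\ the number of inversions of $(\pi(3\ell-2),\pi(3\ell-1),\pi(3\ell))$ taken $\bmod 2$. Using three positions per bit is exactly what leaves room to redistribute values; note already that a swap whose two drawers both lie in $B_\ell$ transposes two entries of that triple and so flips bit $\ell$ and nothing else, and a swap between a drawer of $B_\ell$ and a drawer of $B_m$ changes only bits $\ell$ and $m$.

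Now fix a target pair $(i_0,i_1)$. If $i_0=i_1$ the target is $g_0(\pi_0^{\mathrm{init}})$ itself, realized by the identity swap $T_{1\leftrightarrow 1}$, so assume $i_0\neq i_1$. It then suffices to swap one drawer of $B_{i_0}$ with one drawer of $B_{i_1}$ so that both parities flip, and there are nine candidate swaps. A short inversion count shows that exchanging a value $u$ currently in $B_{i_0}$ with a value $w$ currently in $B_{i_1}$ flips the parity of $B_{i_0}$ precisely when exactly one of $u,w$ lies strictly between the two values of $B_{i_0}$ other than $u$; equivalently, $u$ and $w$ occupy ranks of equal parity in the sorted list consisting of $w$ together with the three current values of $B_{i_0}$. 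Crucially, this condition depends only on the two disjoint value-sets $X$ (the values in $B_{i_0}$) and $Y$ (the values in $B_{i_1}$), not on where inside their blocks those values sit. Hence for each $w\in Y$ there is a \emph{unique} $u=\rho_X(w)\in X$ making the swap flip the $B_{i_0}$-parity, and symmetrically a map $\rho_Y\colon X\to Y$; a valid swap is exactly a pair with $\rho_X(w)=u$ and $\rho_Y(u)=w$, i.e.\ a fixed point of $\rho_Y\circ\rho_X\colon Y\to Y$.

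The heart of the argument is the combinatorial lemma that $\rho_Y\circ\rho_X$ always has a fixed point. I would prove it via the functional graph on the six-element set $X\sqcup Y$ with edges $w\mapsto\rho_X(w)$ ($w\in Y$) and $u\mapsto\rho_Y(u)$ ($u\in X$): every vertex has out-degree one, so the graph contains a cycle, and since its edges alternate between $X$ and $Y$ the cycle has even length $2$, $4$, or $6$. A $2$-cycle is precisely the desired fixed point. A $6$-cycle forces both $\rho_X$ and $\rho_Y$ to be bijections; unwinding the definition of $\rho$ (writing $X=\{x_1<x_2<x_3\}$, one has $\rho_X(w)=x_2$ unless $w$ lies between $x_1$ and $x_3$, in which case $\rho_X(w)=x_3$ or $x_1$ according to the sub-interval) this pins the interleaving of $X$ and $Y$ down to the two perfectly alternating orders, and for each of them a direct computation gives $\rho_Y\circ\rho_X=\mathrm{id}$ — so $2$-cycles exist after all, a contradiction. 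The remaining possibility, a $4$-cycle $y\to x\to y'\to x'\to y$, I would rule out by a bounded case analysis organized by the image of $\rho_X$ (necessarily of size $\le 2$ in this case, since otherwise $\rho_X$ is bijective), repeatedly using that any value outside the span of $X$ is sent by $\rho_X$ to the median $x_2$: in each case one finds that the purported $4$-cycle actually forces a $2$-cycle or is outright inconsistent. (Alternatively one can bypass the graph entirely and simply check, up to the obvious $X\leftrightarrow Y$ and order-reversal symmetries, each of the $\binom{6}{3}=20$ interleavings of $X$ and $Y$, exhibiting a good swap in each.)

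I expect the main obstacle to be exactly this last lemma — in particular excluding the $4$-cycle, or equivalently keeping the interleaving case analysis in order. Everything else — the block construction, the reduction to nine candidate swaps, and the inversion-count characterization of which swap flips a given block's parity — is routine bookkeeping.
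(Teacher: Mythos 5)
Your construction of $g_0$ — consecutive triples, one parity bit per triple — is identical to the paper's, and your reduction of the problem to finding a single cross-block swap that flips both parities is exactly the paper's claim ("there always exists such a swap that flips the parity of both triples"). The paper disposes of that claim by saying one can enumerate the cases (by computer or by hand using symmetries); you instead organize the verification through the functions $\rho_X,\rho_Y$ and the bipartite functional graph, which correctly reduces the $6!$ position-orderings to the $\binom{6}{3}=20$ interleavings (your observation that the flip condition depends only on the value-sets, not on positions within blocks, is a genuine simplification), and then shows a good swap is exactly a $2$-cycle of $\rho_Y\circ\rho_X$. Your $6$-cycle elimination is right (both alternating interleavings give $\rho_Y\circ\rho_X=\mathrm{id}$), and the $4$-cycle case, which you leave as a case analysis, does check out: in every non-alternating interleaving $\rho_Y\circ\rho_X$ has a fixed point. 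So this is the same approach as the paper with a cleaner reduction of the final case check, but — like the paper — it still ultimately defers to a finite verification rather than a conceptual argument, and you are upfront that the $4$-cycle case is where the residual work lies.
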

\begin{proof}
The function $g_0$ is defined by dividing the input permutation $\pi_0$ into $d = \floor{\frac{r}{3}}$ triples of numbers, the $i$-th triple being:
$$\left( \pi_0(3i - 2), \pi_0(3i - 1), \pi_0(3i) \right)$$
Then each such triple is transformed into a permutation of three elements by renumbering the three values in their order. This permutation is mapped to a single bit by taking the parity of the permutation. For example, $(10, 11, 17)$ is mapped to $(1, 2, 3)$ which corresponds to the identity permutation, with parity $0$.

This function has the desired property: In order to flip output bits $i_0$ and $i_1$ a single swap is made between the $i_0$-th triple and the $i_1$-th triple. There always exists such a swap that flips the parity of both triples. We note that the ordering of the six values in the two triples determines the parities and the chosen swap. To prove that a swap always exists, one may enumerate all cases using a computer program or manually check the few cases remaining after identifying additional symmetries of the problem.

To illustrate we give the following example. Suppose the two triples are:
$$(80, 90, 48) \quad (17, 62, 39)$$
The parities are $0$ and $1$ correspondingly. Swapping $80$ and $39$ we obtain the triples:
$$(39, 90, 48) \quad (17, 62, 80)$$
whose parities are $1$ and $0$. Both parities have been flipped as required.

\end{proof}

\begin{theorem}
\label{theorem:flips2message}
For any positive integer $d$ there is a function $g_1: \{0, 1\}^d \to [m]$ with $m > \parens{\frac{d}{4}}^2$ such that the output of $g_1$ can be fixed arbitrarily by flipping two bits in the input vector. Formally:
$$\forall v \in \{0, 1\}^d: \forall m_0 \in [m]: \exists (i_0,i_1) \in [d]^2:
g_1(v \oplus e_{i_0} \oplus e_{i_1}) = m_0$$
\end{theorem}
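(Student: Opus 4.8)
The plan is to take $g_1$ to be an $\FF_2$-linear map composed with a fixed identification of $\FF_2^k$ with $[m]$, where $m=2^k$. Concretely, fix a $k\times d$ matrix $M$ over $\FF_2$, let $c_i\in\FF_2^k$ be its $i$-th column, and set $g_1(v)=Mv$ (read as an element of $[m]$ via its binary expansion). The virtue of linearity is that it eliminates the dependence on $v$: flipping bits $i_0$ and $i_1$ replaces $v$ by $v\oplus e_{i_0}\oplus e_{i_1}$, and $M(v\oplus e_{i_0}\oplus e_{i_1})=Mv\oplus c_{i_0}\oplus c_{i_1}$. Hence, whatever $v$ is, the set of outputs reachable by a two-bit flip is exactly $Mv\oplus S$, where $S=\{\,c_{i_0}\oplus c_{i_1}:(i_0,i_1)\in[d]^2\,\}$ (note $0\in S$, taking $i_0=i_1$, which the statement permits). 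So the whole theorem reduces to choosing $M$ with $S=\FF_2^k$ while keeping $2^k>(d/4)^2$.

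Next I would build such an $M$ by placing a ``grid'' in the target space. Write $k=2\kappa$ and split $\FF_2^k=\FF_2^\kappa\times\FF_2^\kappa$. Let the columns of $M$ be: one copy of the zero vector $\mathbf 0$; the vectors $(u,\mathbf 0)$ for every nonzero $u\in\FF_2^\kappa$; the vectors $(\mathbf 0,u)$ for every nonzero $u\in\FF_2^\kappa$; and, if $d$ exceeds the resulting count $2^{\kappa+1}-1$, enough extra copies of $\mathbf 0$ to reach $d$ columns (extra columns only enlarge $S$, so they cause no harm). Then $S=\FF_2^k$: a target $(t_1,t_2)$ with both halves nonzero equals $(t_1,\mathbf 0)\oplus(\mathbf 0,t_2)$; a target with exactly one nonzero half equals that column XORed with the zero column; and $\mathbf 0$ comes from $i_0=i_1$. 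This construction needs $d\ge 2^{\kappa+1}-1$.

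To close the argument I would pick $\kappa=\lfloor\log_2(d+1)\rfloor-1$, assuming $d\ge 3$ (for $d\le 2$ the trivial choice $m=1$, $g_1\equiv 1$ already gives $m>(d/4)^2$). Then $\lfloor\log_2(d+1)\rfloor=\kappa+1$, so $2^{\kappa+1}\le d+1<2^{\kappa+2}$. The left inequality yields $2^{\kappa+1}-1\le d$, so there are enough columns; the right inequality yields $2^\kappa>(d+1)/4>d/4$, hence $m=4^\kappa=(2^\kappa)^2>(d/4)^2$, as required.

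The one genuine delicacy is the rounding. This scheme forces $m$ to be a power of $4$ (the grid needs a square target space and about $2\sqrt m$ columns), and in the worst case $d$ sits just below $2^{\kappa+2}$, where the largest usable power of $4$ is only roughly $d^2/16$ — this is precisely where the constant $\tfrac1{16}$ originates, and why a cleaner bound like $(d/2)^2$ is not obtained this way. The other point to watch is the degenerate small-$\kappa$ regime (for instance $\kappa=1$, where each ``block'' contributes only a single column): this is exactly why the zero column is included, so that targets with a vanishing half are still hit. Everything else — the identification $\FF_2^k\cong[m]$, the padding, and the routine case check that $S=\FF_2^k$ — is mechanical.
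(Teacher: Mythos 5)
Your proposal is correct and is essentially the paper's own construction in slightly different clothing: the paper maps each of two length-$2^a$ blocks to the XOR of the positions of its one-bits (a Hamming-syndrome map), which as a linear map over $\FF_2$ has column set $\{(u,\mathbf 0):u\in\FF_2^a\}\cup\{(\mathbf 0,u):u\in\FF_2^a\}$ — exactly your grid of columns, with the zero column counted twice. The only differences are cosmetic: you make the linearity and the column-set viewpoint explicit, and by de-duplicating the zero column you can afford $\kappa=\lfloor\log_2(d+1)\rfloor-1$ rather than the paper's $a=\lfloor\log_2(d/2)\rfloor$, a marginal tightening that still lands on the same $m>(d/4)^2$ bound; you also invoke the $i_0=i_1$ option for the zero target where the paper always flips one bit per block (using that flipping bit index $0$ is a syndrome no-op).
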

\begin{proof}
Given a vector $v$ of $2^a$ bits, the bits can be numbered $0, ..., (2^a-1)$. Then one can compute the bitwise XOR of all indices of $1$ bits (that is, $\bigoplus_{i: v_i=1} i$). Flipping the $i$-th bit in the vector xors $i$ to the result, so any target output is achievable by flipping one bit in the vector. We note that this is similar to the Hamming parity-check matrix, computing the error syndrome of the Hamming code.

Let $a$ be the largest integer such that $2^a \le \frac{d}{2}$. We note that $2^a > \frac{d}{4}$. Partition the input vector $v \in \{0, 1\}^d$ to two vectors, each of size $2^a$ bits, ignoring leftover bits. By flipping a single bit in each of the vectors of size $2^a$ we can transmit $a$ bits of information, so in total the number of message options is
$$m = 2^a \cdot 2^a > \parens{\frac{d}{4}}^2$$
as required.
\end{proof}

\subsection{Cycle-breaking set of permutations}
\label{sec:breaking}
We now construct a set of permutations $T \subset S_n$ such that any permutation of $n$ elements can be composed with a permutation in $T$ to obtain a permutation with no cycles larger than $k = n/u$.

We note that we use $S_n$ here for simplicity. In practice we apply the results of this section to $S_{n-r}$ as described in \Cref{sec:scheme}.

\subsubsection{Ramanujan graphs}

We note that this is the most involved part of our work, using concepts such as expander graphs, Ramanujan graphs and the expander mixing lemma. We summarize our usage of this tool in \Cref{cor:ramanujan} below.

We use the famous construction of Ramanujan graphs \cite{ramanujan}: Given two prime numbers $p$ and $q$ such that $p \equiv q \equiv 1 \pmod{4}$, there is a $(p+1)$-regular Ramanujan graph of $n=q(q^2-1)/2$ or $n=q(q^2-1)$ vertices depending on whether or not $p$ is a quadratic residue modulo $q$. Given $p$ and $q$ the graph is constructed efficiently using a simple formula.

Since this graph is Ramanujan, the expander mixing lemma implies that for any two disjoint sets of vertices $V_1$ and $V_2$ the number of edges between the two sets $e(V_1, V_2)$ satisfies:
$$\left| e(V_1, V_2) - \frac{(p + 1) |V_1| |V_2|}{n} \right| \le 2 \sqrt{p |V_1| |V_2|}$$
In other words, the number of such edges is approximately the number obtained if every edge exists independently with probability $\frac{p + 1}{n}$.

If both $V_1$ and $V_2$ to contain at least $x \cdot n$ vertices each ($x < 1$) the following bound can be derived:
$$e(V_1, V_2) \ge (p + 1)x^2n - 2 \sqrt{p} x n$$

The total number of edges in the graph is $\frac{1}{2}(p+1)n$. We can normalize $e(V_1, V_2)$ by this number to obtain:
$$\frac{2}{(p+1)n}e(V_1, V_2) \ge 2x^2 - \frac{4x\sqrt{p}}{p+1} \ge 2x^2 - \frac{4x}{\sqrt{p}}$$

If $p \ge 16 / x^2$  we have $\frac{4x}{\sqrt{p}} \le x^2$ and the above bound implies the simpler bound:
$$\frac{2}{(p+1)n}e(V_1, V_2) \ge x^2$$

We summarize the only takeaway of this section that we will be using:

\begin{corollary}
\label{cor:ramanujan}
Given two prime numbers $p$ and $q$ such that $p \equiv q \equiv 1 \pmod{4}$, there is an efficiently constructed $(p+1)$-regular graph of $n=q(q^2-1)/2$ or $n=q(q^2-1)$ vertices depending on whether or not $p$ is a quadratic residue modulo $q$. If $p \ge 16 / x^2$, any two disjoint subsets of vertices of this graph of size at least $x \cdot n$ have an edge between them. Moreover, the number of edges between the subsets is at least $x^2$ of the total number of edges in the graph.
\end{corollary}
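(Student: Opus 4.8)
The plan is to read the corollary straight off the discussion preceding it: everything is a packaging of the Lubotzky--Phillips--Sarnak construction together with the expander mixing lemma, and no new idea is needed. First I would invoke the LPS construction \cite{ramanujan}: for primes $p\equiv q\equiv 1\pmod 4$ it produces an explicit $(p+1)$-regular graph on $n=q(q^2-1)/2$ vertices when $p$ is a quadratic residue modulo $q$ and on $n=q(q^2-1)$ vertices otherwise, described by a closed-form adjacency rule (hence constructible in time polynomial in $n$), and this graph is Ramanujan, i.e.\ every eigenvalue of its adjacency matrix other than $\pm(p+1)$ has absolute value at most $2\sqrt p$. This settles the ``efficiently constructed'', regularity, and vertex-count clauses.

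For the mixing clause I would feed the spectral bound $2\sqrt p$ into the expander mixing lemma. For disjoint $V_1,V_2$ with $|V_1|,|V_2|\ge xn$ and $x<1$, the lemma gives
\[
\left| e(V_1,V_2) - \frac{(p+1)|V_1||V_2|}{n} \right| \le 2\sqrt{p\,|V_1||V_2|},
\]
and since the function $t\mapsto \frac{(p+1)t^2}{n}-2\sqrt p\,t$ is increasing for $t\ge xn$ in our parameter range, dropping to the lower endpoint $t=xn$ yields $e(V_1,V_2)\ge (p+1)x^2n-2\sqrt p\,xn$. Normalizing by the total number of edges $\tfrac12(p+1)n$ gives
\[
\frac{2\,e(V_1,V_2)}{(p+1)n} \ \ge\ 2x^2-\frac{4x\sqrt p}{p+1} \ \ge\ 2x^2-\frac{4x}{\sqrt p},
\]
using $\sqrt p/(p+1)\le 1/\sqrt p$.

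Finally I would use the hypothesis $p\ge 16/x^2$, equivalently $\sqrt p\ge 4/x$, to get $4x/\sqrt p\le x^2$ and therefore $\tfrac{2}{(p+1)n}\,e(V_1,V_2)\ge 2x^2-x^2=x^2>0$. Thus the number of edges between $V_1$ and $V_2$ is at least an $x^2$ fraction of all edges; in particular $e(V_1,V_2)>0$, and being an integer it is at least $1$, so there is an edge between the two sets. The only places demanding care --- the nearest thing to an obstacle in an otherwise routine argument --- are using the correct form of the mixing lemma (the relevant eigenvalue is the second largest \emph{in absolute value}, which equals $2\sqrt p$ for a Ramanujan graph), justifying the substitution $|V_1||V_2|\mapsto x^2n^2$ inside the bound, and checking signs so that subtracting the error term still leaves a positive quantity.
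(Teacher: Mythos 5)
Your proof follows exactly the same route as the paper: invoke the LPS construction for the Ramanujan property, apply the expander mixing lemma, bound $e(V_1,V_2)\ge (p+1)x^2n-2\sqrt p\,xn$, normalize by $\tfrac12(p+1)n$, and use $p\ge 16/x^2$ to absorb the error term. The only difference is that you explicitly note the monotonicity needed to substitute $|V_1||V_2|\mapsto x^2n^2$ (which the paper glosses over), a small but welcome tightening rather than a change of approach.
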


\subsubsection{Cycle-breaking set of transpositions}
\label{sec:edges}
Our goal is to construct a cycle-breaking set of permutations, such that the cycles of any given permutation can be broken down to be smaller than $k=n/u$ by composing with a permutation from the set. In this section we move one step towards this goal by proving the following lemma.

\begin{lemma}
\label{lemma:edges}
Given $n$ and $u \ge 1$, one can efficiently construct a set of $s = O(u^2 n)$ transpositions on $n$ elements such that the cycles of any given permutation $\pi \in S_n$ can be broken down to be smaller than $k=n/u$ by composing $\pi$ with no more than $2u$ of the predetermined transpositions.

Moreover, there are many ways to choose the transpositions. In fact, for any permutation $\pi \in S_n$ there are $\ell \le 2u$ subsets $W_1, ..., W_\ell$ of the predetermined transpositions such that any choice of $\ell$ transpositions, one from each $W_i$ breaks the cycles of $\pi$ to be smaller than $k=n/u$. Each such subset satisfies $|W_i|\ge \frac{s}{16u^2}$.
\end{lemma}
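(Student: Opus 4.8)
The plan is to take the predetermined transpositions to be the edges of a Ramanujan graph, and, for each long cycle of $\pi$, to break it apart with a family of \emph{nested} transpositions — one edge from each of several disjoint edge‑bundles — laid out so that the nesting forces every transposition to split (never merge) a cycle, no matter which edge is picked from its bundle. That forcing is what will make ``any choice, one from each $W_i$'' work.

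Concretely: pick a prime $p\equiv 1\pmod 4$ with $p\ge 256u^2$ (so $p=O(u^2)$ and $16/x^2\le p$ for $x=1/(4u)$), and let $G$ be the $(p+1)$‑regular Ramanujan graph on (essentially) $n$ vertices from \Cref{cor:ramanujan}; its edge set, read as transpositions on $[n]$, has size $s=\tfrac12(p+1)n=O(u^2n)$. Given $\pi$, handle each cycle separately: a cycle of length $<k$ needs nothing, and for a cycle $C=(x_1\ x_2\ \cdots\ x_L)$ with $L\ge k$ set $w\approx k/4$ and take the consecutive ``left'' windows $A_i=\{x_{(i-1)w+1},\dots,x_{iw}\}$ and ``right'' windows $B_i=\{x_{L-iw+1},\dots,x_{L-(i-1)w}\}$ for $i=1,\dots,m_C$, where $m_C$ is chosen with $m_Cw<L/2\le(m_C+1)w$ (so $m_C\approx 2L/k$, and $A_{m_C}$ still lies entirely to the left of $B_{m_C}$). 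Let $W_i^{C}=\{(a\ b):a\in A_i,\ b\in B_i,\ \{a,b\}\in E(G)\}$. Since $|A_i|=|B_i|=w\ge n/(4u)$ and these are disjoint vertex sets, \Cref{cor:ramanujan} gives $|W_i^{C}|\ge (1/(4u))^2 s = s/(16u^2)$, and the bundles $W_i^{C}$ are pairwise disjoint because the windows are; listing them over all long cycles gives $\ell=\sum_C m_C$ bundles with $\ell\le\sum_C 2L_C/k\le 2n/k=2u$ (the small rounding slack in $w$ is absorbed into the constants).

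The heart of the proof is the structural claim: for any long cycle $C$ and any $\sigma_i\in W_i^{C}$, composing $(x_1\cdots x_L)$ with $\sigma_1\sigma_2\cdots\sigma_{m_C}$ in this order produces only cycles of length $<k$ (and since distinct cycles have disjoint supports, doing this for each long cycle breaks $\pi$). I would prove this by induction on $i$: after $\sigma_1,\dots,\sigma_i$ the permutation is a disjoint union of $i$ ``chips'', each of length $<k$, together with one ``core'' cycle carried by the positions $x_{a_i+1},\dots,x_{b_i}$, where $a_i\in A_i$ and $b_i\in B_i$ are the endpoints of $\sigma_i$ — using the standard fact that composing $(x_1\cdots x_L)$ with $(x_a\ x_b)$, $a<b$, yields exactly the cycles on $\{x_{a+1},\dots,x_b\}$ and on its complement. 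For the inductive step one observes that $A_{i+1}$ lies strictly to the right of $A_i$ and $B_{i+1}$ strictly to the left of $B_i$, so \emph{whatever} edge $\sigma_{i+1}$ is, both its endpoints fall inside $\{x_{a_i+1},\dots,x_{b_i}\}$, i.e.\ inside the current core; hence $\sigma_{i+1}$ splits that core into a new chip of length $(a_{i+1}-a_i)+(b_i-b_{i+1})\le 4w-2<k$ and a new core $\{x_{a_{i+1}+1},\dots,x_{b_{i+1}}\}$, and at $i=m_C$ the final core has length at most $4w-1<k$ by the choice of $m_C$. The one place this can break — and therefore the step I expect to be the main obstacle — is precisely ``both endpoints of $\sigma_{i+1}$ are still in the core'': if one endpoint had already been split off into a formed chip while the other remained in the core, $\sigma_{i+1}$ would merge that chip back and destroy the progress. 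Ruling this out for \emph{every} admissible choice is exactly what the nesting of the windows buys us, and getting the window layout and the index $m_C$ to simultaneously respect the nesting, keep all chips below $k$, and keep $\ell\le 2u$ and $|W_i|\ge s/(16u^2)$ is the delicate bookkeeping.

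Finally, everything is efficient: $G$ and its edge list are produced by the explicit formula of \Cref{cor:ramanujan}; decomposing $\pi$ into cycles, laying out the windows, and selecting any edge of $G$ between $X_{A_i}$ and $X_{B_i}$ (one exists since $|W_i^{C}|>0$) are polynomial time. One technical caveat to record: \Cref{cor:ramanujan} supplies graphs only on the special orders $q(q^2-1)$ or $q(q^2-1)/2$, so for a general $n$ one uses such a graph on the smallest admissible order $\ge n$ and discards the extra vertices (equivalently, pads $\pi$ with fixed points), which changes the windows only by a $1+o(1)$ factor; this is routine and will be stated but not dwelt on.
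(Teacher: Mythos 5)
Your proposal is correct and takes essentially the same route as the paper: edges of a Ramanujan graph with $p\ge 256u^2$ supply the transpositions, each long cycle is partitioned into arcs of size about $n/(4u)$, and edges are chosen between mirror-image arcs so that each transposition splits off a short piece while keeping the total count at most $2u$ and each edge-bundle $W_i$ of size at least $s/(16u^2)$. Your explicit induction verifying that each $\sigma_{i+1}$ has both endpoints inside the current core — so the nesting of the $A_i,B_i$ windows forces every transposition to split rather than merge — is a detail the paper leaves implicit (it simply asserts that each resulting cycle contains at most four arcs), but it is the same underlying construction.
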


\begin{proof}
The set of transpositions will be determined by the set of edges of a Ramanujan graph with $n$ vertices that correspond to the numbers $1, ..., n$ that $S_n$ acts on. Every edge specifies a transposition by describing the pair of indices that are swapped.

We construct the Ramanujan graph by choosing $p_0 \ge 256 u^2$, so \Cref{cor:ramanujan} guarantees any two sets of vertices of size $\frac{n}{4u}$ are connected by at least $\frac{1}{16u^2}$ of the edges in the graph. The number of transpositions constructed from the edges of this graph is $\frac{p_0 + 1}{2} n = O(u^2n)$ as required by the lemma.

We note that the requirements of \Cref{cor:ramanujan} do not allow arbitrary choices of the degree and size of the graph. We will need to increase $p_0$ and $q$ slightly in order to satisfy the requirements. There are number-theoretic results that prove that this does not change the conclusion of \Cref{lemma:edges}, see for example \cite{expanders}. For brevity this issue is ignored in this analysis.

Given a permutation $\pi \in S_n$, its cycles can be broken using the set of transpositions in the following way. We handle each cycle of $\pi$ separately. We partition the cycle into consecutive sets of vertices of size not larger than $\frac{n}{4u}$. See \Cref{fig:cycle} for an illustration. If the number of vertices in the cycle is not divisible by $\frac{n}{4u}$ the last set would be smaller.

\begin{figure}
    \centering
\begin{tikzpicture}
\draw[dotted,thick] circle [radius=50pt];
\draw[blue] circle [radius=65pt];
\foreach \angle in {30, 90, 150, 210, 270, 330} {
    \draw[blue] (\angle:60pt) -- +(\angle:10pt);
}
\node[fill=white] at (60:65pt) {\textcolor{blue}{$V_1$}};
\node[fill=white] at (0:65pt) {\textcolor{blue}{$V_2$}};
\node[fill=white] at (300:65pt) {\textcolor{blue}{$V_3$}};
\node[fill=white] at (240:65pt) {\textcolor{blue}{$V_4$}};
\node[fill=white] at (180:65pt) {\textcolor{blue}{$V_5$}};
\node[fill=white] at (120:65pt) {\textcolor{blue}{$V_6$}};
\draw[dotted,red,thick,<->]
  (35.355pt, 35.355pt)
  arc[start angle=-45,end angle=-135,radius=50pt];
\draw[dotted,red,thick,<->]
  (35.355pt, -35.355pt)
  arc[start angle=45,end angle=135,radius=50pt];
\draw[dotted,red,thick,<->]
  (50pt, 0pt) -- (-50pt, 0pt);
\end{tikzpicture}

\caption{Cycle breaking example. The circle of  black dots corresponds to a cycle of a permutation $\pi$, such that applying $\pi$ on each vertex yields the next vertex clockwise. We divide the cycle into six sets of vertices $V_1, ..., V_6$. Three edges are chosen between the sets and are marked in red. By composing $\pi$ with the transpositions that correspond to the chosen edges, the cycle is broken down into four small cycles.}
\label{fig:cycle}
\end{figure}
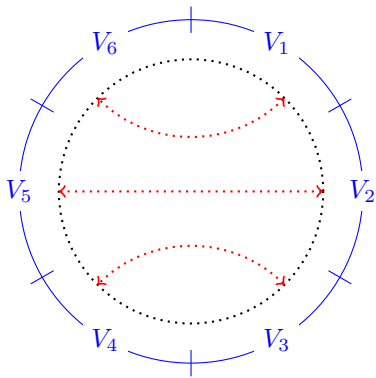

Since there is an edge between every two sets, we can pick an edge specifically between pairs of sets that are a reflection of each other along an arbitrary fixed axis. If a cycle is partitioned into sets of vertices $V_1, ..., V_t$, numbered in the direction the permutation acts, then we pick an edge between $V_1$ and $V_t$, another edge between $V_2$ and $V_{t-1}$ and so on. These edges are marked in red in \Cref{fig:cycle}. If the number of sets is odd then one set is left unconnected. We choose this set to be the middle set, $V_{(t+1)/2}$ in the above indexing. Furthermore, if the cycle is not divisible by $\frac{n}{4u}$ we require the smaller set with the division reminder to be left unconnected, so it must be one of the middle sets.

Every transposition between elements of the same cycle breaks it into two smaller cycles. The suggested transpositions result in smaller cycles, each cycle containing elements of up to four sets, so it is not larger than $\frac{n}{u}$ as required. The number of transpositions made is at most half the number of sets, so out of the $\frac{1}{2}(p_0+1)n$ edges of the graph, up to $2u$ are chosen.
\end{proof}

We note that the transpositions that break the cycles of a specific permutation in \Cref{lemma:edges} are disjoint, in the sense that no two of them swap the same number. This means the chosen transpositions commute, so there is no need to specify the order they are composed.

\subsubsection{Subsets of transpositions}
\Cref{lemma:edges} constructs a set of $s = \frac{1}{2}(p_0+1)n$ transpositions such that the cycles of any permutation can be broken by applying up to $2u$ of the predetermined transpositions. An important feature of this construction is that there are many ways to choose each of the transpositions. In fact, \Cref{cor:ramanujan} shows that at least $\frac{s}{16u^2}$ of the predetermined transpositions can be used interchangeably for each of the $2u$ required transpositions.

Our goal is to construct a collection of subsets $T_1, ..., T_m$ of the $s$ transpositions, each subset $T_i$ containing $2u$ transpositions. Given any list of $2u$ disjoint subsets of transpositions $W_1, ..., W_{2u}$, each $W_i$ of size $\ge \frac{s}{16u^2}$, we would like at least one predetermined subset $T_i$ to intersect with every one of $W_1, ..., W_{2u}$, so $T_i$ consists of one transposition from each set $W_j$, and no other transpositions. Having achieved this the spy can choose the sets $W_i$ according to \Cref{lemma:edges} and obtain one of the predetermined subsets of transpositions $T_i$ that breaks all cycles to be smaller than $k = n/u$.

\begin{remark*}
We note that the problem of constructing such a collection of subsets is solved by a generalization of the expander mixing lemma for hypergraphs, which is a special case of expander complexes. This result is proven in \cite[Theorem 1.4]{mixing_complex}. A construction of Ramanujan complexes (which are expander complexes with optimal spectral gap) is given in \cite{ramanujan_complex}. A survey that includes both the construction and the lemma is given in \cite{survey_complex}. Here we avoid using these more advanced results in favor of a simpler construction using Ramanujan graphs.
\end{remark*}

In order to construct the sets $T_1, ..., T_m$ we recursively construct Ramanujan graphs. On the first iteration we consider the given $s$ transpositions as \textit{vertices} of a Ramanujan graph. The edges of this graph provide a list of pairs of transpositions, such that any two subsets of the original $s$ transpositions that are large enough have an edge between them. In the second iteration we consider the \textit{edges} of the Ramanujan graph constructed in the first iteration as \textit{vertices} of a new Ramanujan graph. Each edge of the Ramanujan graph corresponds to a pair of pairs of transpositions. On the $t$-th iteration each edge corresponds to a set of $2^t$ transpositions. The required sets of transpositions $T_1, ..., T_m$ are chosen according to the edges of the Ramanujan graph of the final iteration.

We now provide the parameters for the graph constructed in each iteration. On the first iteration we construct a Ramanujan graph with $s$ vertices and $p_1 > 16 \parens{16u^2}^2 = 4096 u^4$. \Cref{cor:ramanujan} ensures that at least $\frac{1}{\parens{16u^2}^2}$ of the edges are between every two sets of transpositions each containing at least $\frac{1}{16u^2}$ of the transpositions.

On each iteration we consider the edges of the previous iteration as vertices of a new Ramanujan graph with $p > 16/x^2$ where $x$ is the portion of edges guaranteed by the previous iteration. This way at least $x^2$ of the edges of the new iteration are between any two sets containing at least a portion $x$ of the edges of the previous iteration.

At the $t$-th iteration (starting at $t=0$) we have a collection of sets, each containing $2^t$ of the original transpositions. The proportion of sets guaranteed at the $t$-th iteration is $\frac{1}{\parens{16u^2}^{2^t}}$. We will need to pick a prime $p_t > 16 \parens{16u^2}^{2^{t+1}}$ increasing the number of edges by $p_t + 1$.

We repeat this process for $\tau$ steps, and require the number of transpositions in the final sets $2^\tau$ to be at least the number of required transpositions $2u$, so $2^\tau$ is bounded by $4u$. The number of sets constructed this way is approximately:
$$s \cdot \prod_{t=0}^{\tau-1} p_t = s \cdot \prod_{t=0}^{\tau-1} 16 \parens{16u^2}^{2^{t+1}} = s \cdot 16^\tau \parens{16u^2}^{2 + 4 + ... + 2^\tau}$$

Recalling that $s = \frac{1}{2}(p_0+1)n$ for $p_0 \ge 256 u^2$, the total number of sets is approximately:
$$128n u^2 \cdot 16^\tau \parens{16u^2}^{2 + 4 + ... + 2^\tau} = 8 n \cdot 16^\tau \parens{16u^2}^{1 + 2 + 4 + ... + 2^\tau}$$
which is bounded by
$$8 n \cdot 16^\tau \parens{16u^2}^{2 \cdot 2^\tau} \le 8 n \cdot (4u)^4 \parens{16u^2}^{8u} = 8 n \cdot \parens{4u} ^ {16u + 4}$$

In conclusion, we have constructed a set of no more than $8 n \cdot \parens{4u} ^ {16u + 4}$ permutations (each one is a set of transpositions) such that the cycles of any permutation can be broken down into cycles not larger than $n/u$ by composing with one of the predetermined permutations.

\begin{remark*}
In order for the construction to work we may need to apply a number of transpositions that is not a power of two. The easiest way to allow this is to add dummy elements to the set of transpositions constructed in \Cref{sec:edges}. This slightly increases the number of required sets, which is also increased in order to meet the requirements of \Cref{cor:ramanujan}. These considerations are ignored for brevity, as they do not change the bottom line. 
\end{remark*}

\subsection{Connecting the components}
In this section we connect the components to prove the theorem given in the introduction:
\deterministic*
\begin{proof}
The spy and prisoners use the scheme described in \Cref{sec:scheme}. All prisoners open the first $r$ drawers. Using \Cref{theorem:swap2flips} and \Cref{theorem:flips2message}, the spy communicates to all prisoners a message out of approximately $\parens{\frac{r}{12}}^2$ options by making a single swap between the first $r$ drawers. The transmitted message corresponds to one of the predetermined permutations on the $n-r$ numbers not in the first $r$ drawers. The set of permutations is constructed in \Cref{sec:breaking}. There are up to $8 (n - r) \cdot \parens{4u} ^ {16u + 4}$ permutations in the set, each permutation consisting of up to $2u$ transpositions. The spy can always find a predetermined permutation such that composing it with the permutation defined by the remaining $n-r$ drawers yields a permutation with no cycles larger than $\frac{n-r}{u}$.

Following this strategy the total number of drawers opened by a prisoner in the worst case is:
$$r + \frac{n-r}{u}$$

For the construction to work the number of options encoded in the first $r$ drawers must be at least as large as the number of predetermined permutations:
\begin{equation}
\label{eq:enough_options}
\parens{\frac{r}{12}}^2 \ge 8 (n - r) \cdot \parens{4u} ^ {16u + 4}
\end{equation}

By choosing for example $16u + 4 = (1 - \epsilon) \frac{\log n}{\log \log n}$ for any $\epsilon > 0$ we obtain:
$$\parens{4u} ^ {16u + 4} < n ^ {1-\epsilon}$$
so we can pick $r = O(n^{1 - \epsilon/2})$ that satisfies \Cref{eq:enough_options} to obtain:
$$r + \frac{n-r}{u} = O\parens{\frac{n \log \log n}{\log n}}$$
as required.

As for the time complexity of running this strategy, note that the number of candidate permutations in the strategy is bounded by $8n^{2 - \epsilon}$. Each permutation consists of up to $2u = O(\log n)$ transpositions, so all permutations can be written down and numbered by the spy and prisoners in time $O(n^2)$. The spy can find the cycle structure of the last $n-r$ drawers in time $O(n)$. Using the cycle structure every candidate permutation can be checked in time polynomial in the number of transpositions, so the spy can find the cycle-breaking permutation in time $O(n^2)$. The spy can then encode the index of the chosen permutation in time $O(r)$ as explained in \Cref{sec:encoding}. So in total both spy and prisoners can run the strategy in time $O(n^2)$ each.

We note that the participants need to find prime numbers that satisfy the conditions of \Cref{cor:ramanujan}. It can be checked that the largest needed prime is of size $O(n)$. The time of finding all primes up to this bound is much less than $O(n^2)$, so this part is negligible.
\end{proof}

\section*{Acknowledgement}
We thank Dan Karliner for posing the question of whether opening half of the drawers is necessary to guarantee the prisoners' freedom. We also thank Noam Kimmel, Dean Hirsch, Amit Atsmon, Ido Kessler and Tom Kalvari for helpful insights and suggestions.

\bibliography{main}

\end{document}